\newtheorem{definition}{Definition}
\newtheorem{theorem}[definition]{Theorem}
\newtheorem{proposition}[definition]{Proposition}
\newcommand{\proves}{\vdash}
\newcommand{\supp}{\Vdash}
\newcommand{\base}[1]{\mathfrak{#1}}
\renewcommand{\phi}{\varphi}
\renewcommand{\epsilon}{\varepsilon}
\newcommand{\pa}{\textsf{PA}}
\newcommand{\robq}{\textsf{Q}}
\newcommand{\eq}{\textsf{EQ}}
\title{Classical Arithmetic without Bivalence}
\author{Alexander V. Gheorghiu}
\address{University of Southampton, UK \and University College London, UK}
\email{a.v.gheorghiu@soton.ac.uk}
\begin{document}

\begin{abstract}
    Sandqvis's semantics for classical logic without bivalence resolves the question of an anti-realist account of classical reasoning after Dummett. This paper applies the framework to the essential questions of metamathematics. The system intuitively handles $\omega$-incompleteness, makes induction meaning-constitutive, and yields an elementary consistency proof for Peano Arithmetic using only ordinary induction on the natural numbers, with no appeal to transfinite ordinals or recognition-transcendent truth. 
\end{abstract}

\keywords{inferentialism, proof-theoretic semantics, Dummett, arithmetic, metamathematics}

\maketitle

\section{Introduction}

Traditionally, formal logic has been developed to serve metamathematics --- that is,   the study of mathematics itself using mathematical methods. This has yielded broadly two views: formalism and denotationalism.  The first regards mathematical theories as systems of rules for manipulating uninterpreted symbols; questions about consequence become questions about what can be derived in a proof system \citep{hilbertAckermann1968principles,gentzen1969investigations}. The second treats mathematical vocabulary as referring to mind-independent structures, with logical consequence identified with truth-preservation across all models \citep{Tarski1936}. Both perspectives are powerful: each supports familiar results concerning incompleteness, independence and consistency. Both are problematic as foundations.  

Formalism suffers from certain awkward limitations. A simple example is $\omega$-incompleteness --- a theory $\Gamma$ may prove $\phi(n)$ for any numeral $n$ but fail to prove $\forall x\phi(x)$. How can this be when the ontology is supposed to be determined by the theory? These things are accepted as brute features of a chosen calculus.    While denotationalism can explain these limitations (e.g., by saying that there may be unnamed entities under the scope of the quantifier),  it comes at the cost of presupposing a realist metaphysics and a truth-conditional theory of meaning. \citep{dummett1959truth,dummett1963realism} and  \cite{vaananen2012second} have argued that this begs the question of a foundation of logic is supposed to serve.

Recently, another alternative has been developed: inferentialism. This is the view that meaning is grounded in inferential role and warranted assertibility. It's roots are in in the works of \cite{dummett1993theoryI,dummett1991logicalbasis} and it has been developed most generally by \cite{brandom1994making}. He called justifying classical reasoning on this view `one of the most fundamental and intractable problems in the theory of meaning; indeed, in all philosophy'. This challenge was answered by Sandqvist in  ``Classical Logic without Bivalence'' (\emph{Analysis}, 2009). 

The present paper now asks what this inferentialist semantics for classical logic has to offer the debates on metamathematics. To this end, we establishes the consistency of \emph{Peano Arithmetic} ($\pa$) within the inferential semantics. The proof uses only induction over the natural numbers and no transfinite ordinals (as required in formalism) or mind-independent ontology (as required in denotationalism). On the inferentialist view, the behaviour of arithmetical vocabulary is defined by the theory of arithmetic. When that theory is sufficiently strong to know that the numerals are the ontology, problems like $\omega$-incompleteness vanish immediately.

\section{Classical Logic without Bivalence}

We begin by recalling the relevant definitions from  \cite{Sandqvist_2009}. Fix a first-order signature $\sigma$. The atomic formulas are predicates applied to terms; the logical constant $\bot$ is not considered atomic.

\begin{definition}[Atomic Rule]
An \emph{atomic rule} is an inference figure of the form
\[
\frac{P_1 \quad \cdots \quad P_n}{C}
\]
where $C, P_1, \ldots, P_n$ are atomic formulas.
\end{definition}

\begin{definition}[Base]
A \emph{base} $\base{B}$ is a set of atomic rules. We write $\supp_{\base{B}} A$ for derivability of an atom $A$ in $\base{B}$ as the composition of atomic rules.
\end{definition}

Intuitively, a base represents an agent's material inferential commitments --- the non-logical inferences they accept. The meaning of the logical constants is fixed by a support judgment. 

\begin{definition}[Support]
The \emph{support} judgment $\supp$ is defined inductively by the clauses in Figure~\ref{fig:support}, where $\base{B}$ and $\base{C}$ are bases, all formulas are closed, and $\Delta$ is a non-empty finite set of closed formulas.

\begin{figure}
\hrule
\begin{align*}
&\supp_{\base{B}} A &&\text{iff } \qquad \supp_{\base{B}} A \tag{At}\\
&\supp_{\base{B}} \phi \to \psi &&\text{iff } \qquad \phi \supp_{\base{B}} \psi \tag{$\to$}\\
&\supp_{\base{B}} \forall x\phi &&\text{iff } \qquad \supp_{\base{B}} \phi[x \mapsto t] \text{ for all closed terms } t \tag{$\forall$}\\
&\supp_{\base{B}} \bot &&\text{iff } \qquad \supp_{\base{B}} A \text{ for every closed atom } A \tag{$\bot$}\\
\Delta &\supp_{\base{B}} \phi &&\text{iff } \qquad \text{for all } C \supseteq B, \text{ if } \supp_{\base{C}} \psi \text{ for all } \psi \in \Delta, \text{ then } \supp_{\base{C}} \phi \tag{Inf}
\end{align*}
\vspace{0.5em}
\hrule
\caption{Support in a Base}
\label{fig:support}
\end{figure}

For an arbitrary (possibly infinite) set $\Gamma$ of formulas:
\[
\Gamma \supp \phi \qquad \text{ iff } \qquad \Delta \supp \phi \text{ for some finite } \Delta \subseteq \Gamma.
\]
\end{definition}

This completes the background on the semantics. We defer to \cite{Schroeder-Heister_2024} for details on the motivation and development of proof-theoretic semantics in general.

\section{Theories of Arithmetic}

The language of arithmetic has signature $\sigma=\langle 0,S,+,\cdot\rangle$ together with the equality predicate $=$. All theories considered satisfy the usual equality axioms:
\begin{align}
    & \forall x(x=x) && \tag{\eq1}\\
    & \forall x,y(x=y\to y=x) && \tag{\eq2}\\
    & \forall x,y,z\bigl(x=y\to (y=z\to x=z)\bigr) && \tag{\eq3}\\
    & \forall x,y\bigl(x=y\to(\phi(x)\to \phi(y))\bigr) && \tag{\eq4($\phi$)}
\end{align}
where $\eq4(\phi)$ is the substitution principle for each formula $\phi$.

Let us now return to the question of $\omega$-incompleteness. In the present semantics, the universal quantifier does not free to float above what is determined by theory, rather it is restricted to the syntax. This is because in inferentialist, what exists is determined by the inferential roles of terms in our language. Consequently, all that is is required for a theory $\Gamma$ to be $\omega$-complete is that it knows that the domain is captured by the numerals. 

\begin{definition}[Numerically Definite]
A theory of arithmetic $\Gamma$ is \emph{numerically definite} if for every closed term $t$ there exists a numeral $\bar n$ such that
\[
\Gamma \supp (t=\bar n).
\]
\end{definition} 
\begin{proposition}
If $\Gamma$ is numerically definite, then it is $\omega$-complete. That is, if $\Gamma \supp \phi(\bar n)$ for every numeral $\bar n$, then $\Gamma \supp \forall x\,\phi(x)$.
\end{proposition}

\begin{proof}
Assume $\Gamma \supp \phi(\bar n)$ for each numeral $\bar n$. Let $t$ be an arbitrary closed term. By numerical definiteness there exists a numeral $\bar n$ such that $\Gamma \supp (t=\bar n)$, and hence (in particular) $\Gamma \supp (t=\bar n)$. Using $\eq4(\phi)$ we may substitute equals for equals: from $\Gamma \supp (t=\bar n)$ and $\Gamma \supp \phi(\bar n)$ we obtain $\Gamma \supp \phi(t)$. Since $t$ was an arbitrary closed term, the universal-clause yields $\Gamma \supp \forall x\,\phi(x)$.
\end{proof}

Observe that induction is a consequence of such theories. Induction is the statement that surveying the numerals suffices for the whole space. This may be viewed in the light of Tennant's~(\citeyear{tennant1984disproofs}) interpretation against Weir's~(\citeyear{weir1983truth}) behaviourist realism: where Weir seeks extensional truth-functions for quantifiers (which fail on universals), Tennant holds that grasp of arithmetical meaning manifests through proof-related practice governed by induction.

\begin{proposition}
Let $\Gamma$ be numerically definite. If
\[
\Gamma \supp \phi(0)
\qquad\text{and}\qquad
\Gamma \supp \forall x\bigl(\phi(x)\to \phi(S(x))\bigr),
\]
then
\[
\Gamma \supp \forall x\,\phi(x).
\]
\end{proposition}

\begin{proof}
From $\Gamma \supp \forall x(\phi(x)\to \phi(S(x)))$ we obtain, in particular, $\Gamma \supp \phi(\bar n)\to \phi(S(\bar n))$ for each numeral $\bar n$ (by the universal-clause). Together with $\Gamma \supp \phi(0)$, repeated applications of modus ponens yield
\[
\Gamma \supp \phi(\bar n)
\]
for every numeral $\bar n$.

Now let $t$ be any closed term. By numerical definiteness, choose a numeral $\bar n$ such that $\Gamma \supp (t=\bar n)$. Since $\Gamma \supp \phi(\bar n)$, substitution of equals for equals via $\eq4(\phi)$ yields $\Gamma \supp \phi(t)$. As $t$ was arbitrary, the universal-clause for $\forall$ gives $\Gamma \supp \forall x\,\phi(x)$.
\end{proof}

One objection to the foregoing is that we appear to have \emph{collapsed} the familiar distinction between Robinson Arithmetic ($\mathsf{Q}$) and Peano Arithmetic ($\mathsf{PA}$). Is this a defect?

The $\mathsf{Q}$–$\mathsf{PA}$ gap is proof-theoretic rather than semantic. It presupposes a model-theoretic picture on which the universal quantifier ranges over items beyond the numerals themselves. In the present semantics, however, the theory itself \emph{fixes} the ontology, and the quantifier therefore ranges only over the terms of the language. There is no verification-transcendent residue beyond what the theory's inferential commitments make manifest, and so no room in which the $\mathsf{Q}$–$\mathsf{PA}$ gap could arise. This is a feature, not a defect. 

The inferentialist setup offers another curious phenomenon. A theory of arithmetic $\Gamma$ may admit induction without being numerically definite. This would mean that $\Gamma$ knows its ontology is determined by the numerals yet lacks the calculating power to determine which closed terms denote which numerals. Such a theory should be regarded as incomplete. 

\section{The Consistency of Arithmetic}

The preceding sections have shown that Sandqvist's~(\citeyear{Sandqvist_2009}) semantics exhibits some distinctive features when viewed as a metamathematical framework. We now show that it can directly answer the core issue of proving a theory to be consistent.  

\begin{definition}[Consistency]
A theory $\Gamma$ is \emph{consistent} iff $\Gamma \not \supp \bot$.
\end{definition}

We shall work with the standard axiomatization of arithmetic, \emph{Peano Arithmetic} ($\pa$). Although $\pa$ is indistinguishable from $\robq$ in the semantics, the two theories are syntactically distinct.

\begin{definition}[Peano Arithmetic]
	The axioms of Peano Arithmetic ($\pa$) extend the equality axioms with the following schemata, for all formulae $\phi$:
\begin{align}
& \forall x(\neg (S(x) = 0)) \tag{PA1} \\
& \forall x,y \big( S(x) = S(y) \rightarrow x = y \big) \tag{PA2} \\
& \forall x \big(x + 0 = x\big) \tag{PA3} \\
& \forall x,y \big(x + S(y) = S(x + y)\big) \tag{PA4} \\
& \forall x \big(x \cdot 0 = 0\big) \tag{PA5} \\
& \forall x,y \big(x \cdot S(y) = x \cdot y + x\big) \tag{PA6} \\
&
\big( \phi(0) \land \forall x\, (\phi(x) \rightarrow \phi(S(x))) \big) \rightarrow \forall x\, \phi(x)
\tag{PA7($\phi$)}
\end{align}
\end{definition}

\begin{theorem}
$\pa$ is consistent.
\end{theorem}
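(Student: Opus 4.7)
The plan is to assemble the three ingredients already in place: (i) the soundness direction of Sandqvist's theorem (Theorem~\ref{thm:snc}), (ii) the fact that the arithmetic base supports every axiom of PA (Proposition~\ref{prop:a-pa}), and (iii) the consistency of the arithmetic base itself (Proposition~\ref{prop:a-consistent}). With these in hand, the target theorem is immediate, and the only real subtlety is bookkeeping around the definition of support for possibly infinite assumption-sets.

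Concretely, I would argue by contradiction. Suppose $\pa$ is inconsistent, so that $\pa \vdash \bot$. By the left-to-right direction of Theorem~\ref{thm:snc} (which the paper established for arbitrary, not just finite, $\Gamma$), this gives $\pa \supp \bot$. Unpacking Definition~\ref{def:supp}, there is some finite $\Delta \subseteq \pa$ with $\Delta \supp_{\emptyset} \bot$. Now apply clause (Inf) at the base $\base{A}$, which trivially extends $\emptyset$: Proposition~\ref{prop:a-pa} yields $\supp_{\base{A}} \psi$ for every $\psi \in \pa$, hence in particular for every $\psi \in \Delta$, so clause (Inf) forces $\supp_{\base{A}} \bot$. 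This directly contradicts Proposition~\ref{prop:a-consistent}, and therefore no derivation of $\bot$ from $\pa$ can exist.

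There is no substantive obstacle remaining: the heavy lifting has been done by Proposition~\ref{prop:a-pa} (where each axiom scheme, including the induction scheme PA7($\phi$), was shown to be supported by $\base{A}$) and by Proposition~\ref{prop:a-consistent} (where consistency of $\base{A}$ was established relative to the assumption $(\ast)$ equivalent in strength to $\epsilon_0$-induction). The only place where care is needed is in invoking the infinitary version of Theorem~\ref{thm:snc} and then correctly reducing to a finite $\Delta$ so that clause (Inf) can be applied at $\base{A}$; this is routine given how Definition~\ref{def:supp} was set up. Consequently, the final theorem is a one-line corollary, and the proof in the paper should inherit the same relative-strength qualification as Proposition~\ref{prop:a-consistent}, namely dependence on $(\ast)$.
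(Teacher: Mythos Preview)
Your proposal is correct and follows essentially the same route as the paper: both reduce $\pa \not\vdash \bot$ via Theorem~\ref{thm:snc} to exhibiting a base that supports every axiom of $\pa$ yet does not support $\bot$, and both invoke Propositions~\ref{prop:a-pa} and~\ref{prop:a-consistent} to show that $\base{A}$ is such a base. One cosmetic slip: the implication from $\pa \vdash \bot$ to $\pa \supp \bot$ is the \emph{right-to-left} direction of Theorem~\ref{thm:snc} as stated, not the left-to-right one.
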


\begin{proof}
To show $\pa \not \supp \bot$, it suffices to exhibit a base $\base{X}$ such that $\not \supp_{\base{X}} \bot$ while $\supp_{\base{X}} \alpha$ for every axiom $\alpha \in \pa$.

Let $\base{A}$ be the arithmetic base defined in Figure~\ref{fig:arithmetic-base}, where $A$ ranges over all atomic formulas and $x,y$ range over closed terms. With the possible exception of the substitution schema ($\eq4$), it is easy to see that $\base{A}$ supports the equality axioms. Similarly, with the possible exception of the induction schema ($\pa7$), it is immediate that $\base{A}$ supports the axioms of $\pa$. That it also supports every instance of the substitution and induction schemas follows by straightforward induction on the structure of terms. It therefore remains only to show that $\base{A}$ itself is consistent; that is, $\not \supp_{\base{A}} \bot$.

To this end, define a \emph{weight function} $w(t)$ on closed terms $t$ of the language of arithmetic by primitive recursion:
\[ 
w(t) :=
\begin{cases}
0 & \text{if } t = 0, \\
w(t') + 1 & \text{if } t = S(t'), \\
w(t_1) + w(t_2) & \text{if } t = t_1 + t_2, \\
w(t_1)\, w(t_2) & \text{if } t = t_1 \cdot t_2.
\end{cases}
\]
A straightforward induction on the structure of $\base{A}$-derivations shows that if an equation $t_1 = t_2$ occurs in a $\base{A}$-derivation, then $w(t_1) = w(t_2)$.

It follows immediately that $\base{A}$ cannot support any false numerical identity. In particular, if $\supp_{\base{A}} \bot$, then $\proves_{\base{A}} (1 = 0)$. But this is impossible, since it would imply $w(1) = w(0)$ --- that is, $1 = 0$.

Hence, $\not \supp_{\base{A}} \bot$, and therefore $\pa$ is consistent.
\end{proof}

\begin{figure}  
\hrule \vspace{2mm} 
\[ 
\begin{array}{ccc} \infer[\pa1]{A}{S(x) = 0} & \infer[\pa2]{x=y}{S(x) = S(y)} & \infer[\pa3]{x+0=x}{} \\[4mm] \infer[\pa4]{x+S(y)=S(x+y)}{} & \infer[\pa5]{x\cdot 0=0}{} & \infer[\pa6]{x\cdot S(y)=x\cdot y+x}{} \end{array} 
\] 
\dotfill 
\[ 
\infer[\eq1]{x=x}{} \qquad \infer[\eq2]{x=y}{y=x} \qquad \infer[\eq3]{x=z}{x=y & y=z}
\] 
\hrule \caption{Arithmetic Base $\base{A}$} \label{fig:arithmetic-base}  
\end{figure}

It is also important to stress that the construction of the base $\base{A}$ is not a mere stipulation of consistency. The base is defined explicitly, using entirely standard and acceptable mathematical techniques, and there is in general no guarantee that such a construction will be consistent. Indeed, arbitrary bases of this kind may perfectly well be inconsistent. A substantial part of the foregoing argument is devoted precisely to showing that the particular base $\base{A}$ constructed here both supports the axioms of $\pa$ and yet does not support $\bot$.

It is also worth emphasising that the preceding proof uses only induction on the natural numbers and no transfinite principles. This is entirely acceptable, since the background framework is inferentialist rather than formalist: the definition of the support relation and the reasoning about it may legitimately make use of general, and in particular inductive, reasoning. This is what Dummett~\cite{Dummett_1978} calls a pragmatic (as opposed to vicious) circularity. 

Importantly, the result should not be viewed as a contribution to Hilbert's Programme, nor as showing that the consistency of $\pa$ can be established in a metatheory that is, by formalist standards, weaker or more secure than $\pa$ itself. We shall now consider two objections to the above consistency proof.

The first, from a formalist, is that the definition of the support judgement involves an alternation of quantifiers and negations whose complexity far exceeds that of the arithmetical theories under discussion. One might conclude that the resulting semantics is no better, as a metatheory, than the familiar model-theoretic framework it is meant to replace. 

This objection has force only if one tacitly treats formalism as the default standard for metatheory. Yet any metatheoretical framework stands in need of philosophical justification; finitist and formalist positions are coherent and important, but failt to accommodate the full range of ordinary mathematical reasoning (see e.g., G\"odel's incompleteness theorems \citep{godel1931undecidable,tait_finitism_1981}). To adopt the present semantics is not to smuggle it into a formalist background, but to adopt the inferentialist conception of meaning articulated by \cite{dummett1991logicalbasis} and \cite{Sandqvist2005inferentialist}. On this conception, the support relation is not a single formula to be arithmetised but a meta-theoretic explanation of what we mean when we use our logical language. Questions of arithmetical definability are therefore beside the point; an objection here is ultimately an objection to inferentialism as a theory of meaning.

A second objection comes from the model-theoretic side. The consistency proof appeals to the natural numbers and their properties, in particular through ordinary mathematical induction. Does this not merely displace the problem? What, on the present approach, licenses this appeal? This worry rests on a misunderstanding of the inferentialist standpoint. On the model-theoretic picture, mathematical statements are made true or false by mind-independent structures, and induction is justified by the fact that the natural numbers, so conceived, possess a certain objective structure. On the present approach, by contrast, to grasp the concept of a natural number just is to grasp the inferential role of arithmetical vocabulary. Induction is not a substantive hypothesis \emph{about} an antecedently given domain; it is constitutive of our understanding of the natural numbers.

\section{Soundness and Completeness}
We have shown that $\pa$ is consistent in the inferentialist semantics. Does this suffice to show that $\pa$ is consistent in classical logic? Let $\vdash$ denote classical derivability. It is sound with respect to the semantics: if $\Gamma \vdash \varphi$ then $\Gamma \Vdash \varphi$ \citep{Sandqvist_2009,Gheorghiu2025-GHEPSF-4}. Hence, by contraposition, from
\[
\pa \not\Vdash \bot
\]
we may infer
\[
\pa \not\vdash \bot.
\]
In this sense, the consistency result transfers immediately to the classical setting. 

One may nevertheless ask what happens if one works with a version of the base semantics for which classical derivability is not only sound but also complete:
\[
\Gamma \Vdash \varphi \quad \text{iff} \quad \Gamma \vdash \varphi.
\]
For this to be possible, the language must contain an infinite reserve of constants disjoint from the theory. In the completeness proof of \cite{Gheorghiu2025-GHEPSF-4} the unused constants serve as eigenvariables in the completeness proof. 

Accordingly, consider an extension of the language of arithmetic with an indefinite sequence of constants, so that the signature becomes
\[
\sigma \;=\; \langle (c_i)_i, S, +, \cdot \rangle,
\]
with $c_0$ distinguished as the zero numeral $0$. In this setting, soundness and completeness obtain, and $\Vdash$ and $\vdash$ are extensionally equivalent. What becomes of the foregoing consistency argument under this modification? Essentially nothing changes. 

We extend the base $\mathcal{A}$ by adding, for each constant $c_i$, the rule
\[
\infer{0 = c_i}{}
\]
and we extend the weight function by stipulating $w(c_i) = 0$ for every $i \in \mathbb{N}$. One may view this as saying that $\pa$, which is no longer numerically definite in this enriched signature, is still supported by a numerically definite base; intuitively, the additional constants are treated as mere alternative names for zero. With these minor adjustments, the consistency argument goes through exactly as before and remains entirely elementary. 

A final subtlety concerns the surrounding ontology. The definitions used in formulating the semantics are, in keeping with contemporary mathematical practice, couched in set-theoretic terms. But the inferentialist standpoint underwriting the framework is a form of anti-realism. The `infinite' collections that occur in the syntax are therefore not to be read as completed totalities, but as \emph{potential} infinities: to say that there is an infinite sequence $(c_i)$ of constants is not to posit a completed infinite stock of symbols, but only that there is an indefinite supply of them, in the sense that at any stage a new symbol can be introduced. This matches ordinary mathematical practice, where new signs may always be added as needed. 

\bibliographystyle{apalike}
\bibliography{bib}

\end{document}